\newtheorem{Theorem}{Theorem}[section]
\newtheorem{Lemma}[Theorem]{Lemma}
\newtheorem{Corollary}[Theorem]{Corollary}
\newtheorem{Proposition}[Theorem]{Proposition}
\theoremstyle{definition}
\newtheorem{Remark}[Theorem]{Remark}
\newtheorem{Definition}[Theorem]{Definition}
\def\depth{\operatorname{depth}}
\def\gr{\operatorname{gr}}
\def\ker{\operatorname{ker}}
\def\coker{\operatorname{coker}}
\def\im{\operatorname{im}}
\def\dim{\operatorname{dim}}
\def\id{\operatorname{id}}
\def\pd{\operatorname{pd}}
\def\mod{\operatorname{mod}}
\def\Ext{\operatorname{Ext}}
\def\Tor{\operatorname{Tor}}
\def\Hom{\operatorname{Hom}}
\def\int{\operatorname{in}}
\def\lim{\operatorname{lim}}
\def\ar{\operatorname{ar}}
\def\mm{{\frak m}}
\begin{document}
	
\title[On the Stability of Bass and Betti Numbers under Ideal Perturbations in a Local Ring]{On the Stability of Bass and Betti Numbers under Ideal Perturbations in a Local Ring}
\author{Van Duc Trung}
\address{Department of Mathematics, Hue uiversity of Education, Hue University, 34 Le Loi, Hue, Vietnam}
\email{vdtrung@hueuni.edu.vn}
	
\thanks{2020 {\em Mathematics Subject Classification\/}: 13D07,13D10.\\} 
\keywords{Betti number, Bass number, small perturbation, filter regular sequence}
	
\begin{abstract} Let $(R,\mm)$ be a Noetherian local ring, and let $J$ be an arbitrary ideal of $R$. Suppose $M$ is a finitely generated $R$-module. Let $x_1,\ldots,x_r$ be a $J$-filter regular sequence on $M$.  We provide an explicit number $N$ such that the Bass and Betti numbers of $M/(x_1, \ldots, x_r)M$ are preserved when we perturb the sequence $x_1, \ldots,x_r$ by $\varepsilon_1, \ldots, \varepsilon_r \in \mm^N$.
\end{abstract}
	
\maketitle
\section{Introduction}

This work is motivated by recent advances in the study of small perturbations of algebraic structures, a topic that has received increasing attention due to its relevance in deformation theory. Perturbing defining data by high-order terms naturally arises in many contexts, such as deforming singularities or analyzing the robustness of algebraic invariants under small changes. A typical approach involves adding elements from high powers of the maximal ideal to generators of ideals or sequences, thereby approximating analytic structures by algebraic ones through truncation.

Let $(R, \mm)$ be a Noetherian local ring, and let $I = (x_1, \ldots, x_r)$ be an ideal of $R$. For a positive integer $N > 0$, a perturbation of $I$ with respect to $N$ is an ideal of the form
\[
I' = (x_1 + \varepsilon_1, \ldots, x_r + \varepsilon_r),
\]
where each $\varepsilon_i$ is an arbitrary element of $\mm^N$, for $1 \leq i \leq r$. When $N$ is large, the sequence $(x_1 + \varepsilon_1, \ldots, x_r + \varepsilon_r)$ can be viewed as a small deformation of the original generators of $I$. 

This leads naturally to the following fundamental question:

\medskip
\noindent
\textbf{Question.} \emph{Which structural properties and algebraic invariants of the ideal $I$ or of the quotient ring $R/I$ are preserved under such perturbations, provided that $N$ is sufficiently large?}
\medskip

This question has been studied from several perspectives. Eisenbud~\cite{E} showed that the homology of complexes is well-behaved under small perturbations; for example, regular sequences remain regular when perturbed by high-order terms. Huneke and Trivedi~\cite{HT} extended this to filter regular sequences—an important generalization of regular sequences defined via support conditions. Srinivas and Trivedi~\cite{ST1, ST2} later investigated the behavior of Hilbert–Samuel functions under perturbation, and their results were generalized by Ma, Quy, and Smirnov~\cite{MQS}, and further refined in~\cite{QDT, QT}.

Recent research has also focused on the stability of numerical invariants under perturbations. Ma, Quy, and Smirnov~\cite{MQS} introduced the Hilbert perturbation index, which quantifies the minimal order of perturbation required to preserve the Hilbert–Samuel function. Building on this framework, Quy and the author obtained effective bounds for this index in the generalized Cohen–Macaulay case~\cite{QDT}. Quy and N. V. Trung \cite{QT} later extended the results to arbitrary local rings using a new method.

While considerable attention has been devoted to Hilbert functions and regular sequences, the behavior of homological invariants such as Bass numbers and Betti numbers under small perturbations remains relatively unexplored. Koszul complexes—central objects in commutative algebra and algebraic geometry—are particularly sensitive to such perturbations (see, for instance,~\cite{VDT}). Understanding the extent to which their homology and associated derived functors remain stable under deformations is thus both fundamental and compelling. Related developments appear in the work of Luís Duarte~\cite{LD2}, who studies the behavior of local cohomology modules under small perturbations of ideals and shows that certain cohomological invariants remain stable under such deformations.

In this paper, we address this question by studying the effect of small perturbations on Betti numbers and Bass numbers associated with quotients by filter regular sequences. A related and significant question was posed by Luís Duarte in~\cite[Question~3.9]{LD1}, which asks whether, for a given ideal $I$ of a Noetherian local ring $R$, there exists an integer $N > 0$ such that for every ideal $J$ satisfying $J \equiv I \mod \mathfrak{m}^N$ and having the same Hilbert function as $I$, all Betti numbers of $R/I$ and $R/J$ coincide.

Leveraging the powerful techniques developed in~\cite{QT} for handling perturbations of filter regular sequences, we resolve this question affirmatively in the case where $I$ is generated by a filter regular sequence on a finitely generated $R$-module $M$. Our main result (Theorem~\ref{main-theorem}) provides an explicit bound on $N$, depending on the Artin--Rees numbers and the Loewy lengths of certain annihilator modules, such that all Betti numbers of the quotient $M/(x_1, \ldots, x_r)M$ are preserved under any perturbation of the form $x_i' = x_i + \varepsilon_i$ with $\varepsilon_i \in \mathfrak{m}^N$. This not only affirms but also strengthens the conclusions in~\cite{LD1} by removing the assumption on the Hilbert function and providing effective control over the perturbation order. Furthermore, we establish a corresponding result for Bass numbers.

By the Artin--Rees Lemma, for any submodule \( N \subseteq M \), there exists an integer $c$ such that for every \( n \geq c \), we have
\[
J^n M \cap N = J^{n - c}(J^c M \cap N).
\]
The least such number \( c \) is called the Artin--Rees number of \( N \) with respect to \( J \), and is denoted by \( \ar_J(N) \).

Let $J$ be an arbitrary ideal of $R$. For any \( R \)-module \( M \), the $J$-Loewy length of \( M \) is defined as
\[
a_J(M) := \inf\{ n \in \mathbb{N} \mid J^n M = 0 \}.
\]

Let \(k\) denote the residue field of a Noetherian local ring \((R, \mathfrak{m})\). For any non-zero finitely generated \(R\)-module \(M\), the \(i\)th Betti number of \(M\), denoted by \(\beta^R_i(M)\), is defined as the \(k\)-dimension of the Tor module \(\Tor^R_i(k, M)\), regarded as a \(k\)-vector space. Similarly, the \(i\)th Bass number of \(M\), denoted by \(\mu^i_R(M)\), is defined as the \(k\)-dimension of the Ext module \(\Ext^i_R(k, M)\), also regarded as a \(k\)-vector space.

Our main result is as follows:

\medskip
\noindent
\textbf{Main Theorem} (Theorem~\ref{main-theorem}).  
\emph{Let $x_1, \ldots, x_r$ be a $J$-filter regular sequence on a finitely generated $R$-module $M$. For each \( i = 1, \ldots, r \), define}
\[
a_i = a_J\left( \frac{(x_1, \ldots, x_{i-1})M : x_i}{(x_1, \ldots, x_{i-1})M} \right),
\]
\emph{and let}
\[
N = \max\left\{ a_1 + 2a_2 + \cdots + 2^{r-1}a_r,\ \ar_J(x_1M), \ldots, \ar_J((x_1, \ldots, x_r)M) \right\} + 2.
\]
\emph{For every \( \varepsilon_1, \ldots, \varepsilon_r \in J^N \), let \( x_i' = x_i + \varepsilon_i \) for \( i = 1, \ldots, r \). Then, for all $j \geq 0$, the following equalities hold:}
\[
\beta_j^R\left( M/(x_1, \ldots, x_r)M \right) = \beta_j^R\left( M/(x_1', \ldots, x_r')M \right),
\]
\[
\mu^j_R\left( M/(x_1, \ldots, x_r)M \right) = \mu^j_R\left( M/(x_1', \ldots, x_r')M \right).
\]

\medskip

The structure of the paper is as follows. In Section~2, we recall foundational concepts on initial modules and filter regular sequences. Section~3 presents the main results.

\section{Preliminaries}
	
Throughout this paper, we assume $(R, \mathfrak{m})$ is a local ring with residue field \( k = R / \mathfrak{m} \). and $J$ is an arbitrary ideal of $R$. 

In this section, we reinterpret the notation and results from \cite[Section~2]{QT} in the context of \( R \)-modules. The proofs follow by an entirely analogous argument. Suppose \( M \) is a finitely generated \( R \)-module. For each element \( x \in M \setminus \{ 0 \} \), denote by \( x^* \) the initial form of \( x \) in the associated graded module \( \gr_J(M) \). For convenience, we set \( 0^* = 0 \). 

For any submodule \( N \subseteq M \), define \( \int(N) \) to be the submodule of \( \gr_J(M) \) generated by all elements \( x^* \), where \( x \in N \). We call \( \int(N) \) the initial module of \( N \) in \( \gr_J(M) \).
\begin{Lemma} \textup{(See \cite[Lemma 2.2]{QT})} \label{int quoetient}
Let \( K \subseteq N \) be submodules of \( M \). Denote by \( \int(N/K) \) the initial module of \( N/K \) in \( \gr_J(M/K) \). Then
\[
\int(N/K) = \int(N)/\int(K).
\]
\end{Lemma}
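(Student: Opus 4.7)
The plan is to work degree by degree using the natural graded surjection
\[
\pi : \gr_J(M) \longrightarrow \gr_J(M/K)
\]
whose component at degree $n$ is the obvious map $J^n M/J^{n+1}M \to (J^n M+K)/(J^{n+1}M+K)$. I first unwind the definitions to express each of the three initial modules in question as a concrete subquotient. A standard check shows that for any submodule $L \subseteq M$ and every $n \geq 0$ one has
\[
\int(L)_n = \bigl((L \cap J^n M) + J^{n+1} M\bigr)/J^{n+1} M,
\]
because $\int(L)$ is generated by initial forms of elements of $L$ and is closed under the $\gr_J(R)$-action. Applied to $L = K$ and $L = N$, this identifies $\int(K)_n$ and $\int(N)_n$; and applied to $N/K \subseteq M/K$ (using that $J^n(M/K) = (J^nM + K)/K$) it identifies $\int(N/K)_n$ as a subquotient of $(J^n M+K)/(J^{n+1}M+K)$.

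Next I show that $\ker(\pi) = \int(K)$ and $\pi(\int(N)) = \int(N/K)$, both at each degree $n$. For the kernel the computation reduces to proving
\[
J^n M \cap (J^{n+1} M + K) = (K \cap J^n M) + J^{n+1} M,
\]
where $\supseteq$ is immediate and $\subseteq$ follows by writing $x = y + k$ with $y \in J^{n+1}M$, $k \in K$, and observing $k = x - y \in J^n M$, so $k \in K \cap J^n M$. For the image I apply the modular law in the form $N \cap (J^n M + K) = (N \cap J^n M) + K$, valid because $K \subseteq N$, to obtain
\[
\int(N/K)_n = \bigl((N \cap J^n M) + J^{n+1} M + K\bigr)/(J^{n+1} M + K),
\]
which is exactly the image of $\int(N)_n$ under $\pi$.

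Combining these two facts gives an induced graded isomorphism $\gr_J(M)/\int(K) \cong \gr_J(M/K)$ that sends $\int(N)/\int(K)$ onto $\int(N/K)$, yielding the claimed equality. The only real obstacle is setting up the bookkeeping so that the modular-law identity is applied correctly; once the subquotient descriptions of $\int(K)_n$, $\int(N)_n$, and $\int(N/K)_n$ are in hand, the verification is a short direct inclusion argument.
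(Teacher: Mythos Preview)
Your argument is correct: the degreewise description $\int(L)_n = ((L\cap J^nM)+J^{n+1}M)/J^{n+1}M$ is the standard identification, the kernel computation is a direct modular-law check, and the image computation uses $K\subseteq N$ exactly where it is needed. The paper does not supply its own proof of this lemma; it simply cites \cite[Lemma~2.2]{QT} and remarks that the argument carries over verbatim to the module setting, so there is nothing further to compare.
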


For any graded submodule \( Q \) of \( \gr_J(M) \), denote by \( d(Q) \) the maximum degree among the elements of a graded minimal generating set of \( Q \).

\begin{Proposition}\label{ar and int} \textup{(See \cite[Proposition 2.3]{QT})} For any submodule \( N \subseteq M \), we have
	\[
	\ar_J(N) = d(\int(N)).
	\]
\end{Proposition}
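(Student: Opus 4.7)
My plan is to prove the two inequalities $d(\int(N)) \leq \ar_J(N)$ and $\ar_J(N) \leq d(\int(N))$ separately. Both rest on the identification of $\int(N)$ with the associated graded module $\bigoplus_{n \geq 0}(J^n M \cap N)/(J^{n+1} M \cap N)$, under which multiplication by $\gr_J(R)_1$ carries $\int(N)_{n-1}$ into $\int(N)_n$ through the inclusion $J(J^{n-1} M \cap N) \subseteq J^n M \cap N$. In essence, the proposition says that the filtration $\{J^n M \cap N\}$ becomes $J$-stable from level $c$ on if and only if $\int(N)$ is generated in degrees $\leq c$.

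For $d(\int(N)) \leq \ar_J(N)$, I would set $c = \ar_J(N)$ and observe that for every $n > c$, two applications of the Artin--Rees identity (at levels $n$ and $n-1$) yield
\[
J^n M \cap N = J^{n-c}(J^c M \cap N) = J \cdot J^{(n-1)-c}(J^c M \cap N) = J \cdot (J^{n-1} M \cap N).
\]
Reducing modulo $J^{n+1} M$ gives $\int(N)_n = \gr_J(R)_1 \cdot \int(N)_{n-1}$ for every $n > c$, and a graded Nakayama argument then confines a minimal homogeneous generating set of $\int(N)$ to degrees at most $c$.

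For the reverse inequality, fix a minimal homogeneous generating system $y_1^*, \ldots, y_s^*$ of $\int(N)$ with $\deg y_j^* = d_j \leq d$, where $d := d(\int(N))$, and pick representatives $y_j \in J^{d_j} M \cap N$. For each fixed $n \geq d$, I would prove by induction on $m \geq 0$ that
\[
J^n M \cap N \subseteq J^{n-d}(J^d M \cap N) + J^{n+m} M.
\]
The case $m = 0$ is trivial. For the inductive step, given $y \in J^n M \cap N$, I use the inductive hypothesis to write $y = u + z$ with $u \in J^{n-d}(J^d M \cap N)$ and $z = y - u \in J^{n+m} M \cap N$, lift an expansion $z^* = \sum_j c_j^* y_j^*$ in $\int(N)_{n+m}$ to $w := \sum_j c_j y_j$ with $c_j \in J^{n+m-d_j}$ and $z - w \in J^{n+m+1} M \cap N$, and then verify that $w$ already lies in $J^{n-d}(J^d M \cap N)$. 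Letting $m \to \infty$ and applying Krull's intersection theorem to the finitely generated quotient $M/J^{n-d}(J^d M \cap N)$ (using $J \subseteq \mm$, the case $J = R$ being trivial) yields $J^n M \cap N = J^{n-d}(J^d M \cap N)$ for all $n \geq d$, hence $\ar_J(N) \leq d$.

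The main obstacle I foresee is exactly the verification $w \in J^{n-d}(J^d M \cap N)$ in the inductive step. The subtlety is that a generator $y_j^*$ may sit in degree $d_j$ strictly less than $d$, so I must perform a \emph{degree-swap}: I factor $J^{n+m-d_j} = J^{n-d} \cdot J^{m+d-d_j}$ and absorb the extra factor into the filtration level of $y_j$ via $J^{m+d-d_j}(J^{d_j} M \cap N) \subseteq J^{m+d} M \cap N \subseteq J^d M \cap N$. This trade relies crucially on $N$ being an $R$-submodule closed under $J$-multiplication; once this bookkeeping is in place, the remaining steps reduce to a routine application of Krull intersection.
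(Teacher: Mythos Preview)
Your argument is correct. The identification $\int(N)_n \cong (J^n M \cap N)/(J^{n+1} M \cap N)$ is the right starting point, and both directions go through as you describe: the inequality $d(\int(N)) \leq \ar_J(N)$ follows immediately from $J^n M \cap N = J(J^{n-1}M \cap N)$ for $n > \ar_J(N)$, while the reverse inequality is obtained by your approximation-plus-Krull-intersection argument. The degree-swap you flagged as the main obstacle is indeed routine once written out: since $J^{n+m-d_j} = J^{n-d}\cdot J^{m+d-d_j}$ and $J^{m+d-d_j}(J^{d_j}M\cap N)\subseteq J^{m+d}M\cap N\subseteq J^dM\cap N$, each term $c_j y_j$ lands in $J^{n-d}(J^dM\cap N)$ exactly as you claim.

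There is nothing to compare against in the present paper: the proposition is stated here only with a citation to \cite[Proposition~2.3]{QT} and is not reproved. Your proof is the natural direct argument one would give for this equivalence and would serve as a self-contained replacement for the citation.
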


The next result concerns the Artin–Rees number of submodules in quotient modules.
\begin{Lemma}\textup{(See \cite[Lemma 2.6]{QT})} \label{ar quotient}
Let \( K \subseteq N \) be submodules of \( M \). Denote $\overline{N} = N/K$, then
$$\ar_J(\overline{N}) \leq \ar_J(N).$$	
\end{Lemma}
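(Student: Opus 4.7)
The plan is to reduce the inequality on Artin--Rees numbers to an elementary fact about the degrees of minimal generating sets of graded modules, using the two preceding results as the bridge. By Proposition~\ref{ar and int}, both sides of the claim can be rewritten as $\ar_J(N) = d(\int(N))$ and $\ar_J(\overline{N}) = d(\int(\overline{N}))$, where the latter initial module is taken inside $\gr_J(M/K)$. Applying Lemma~\ref{int quoetient} identifies $\int(\overline{N}) = \int(N)/\int(K)$, so the lemma becomes equivalent to the graded statement
\[
d(Q/P) \leq d(Q),
\]
to be proved whenever $P \subseteq Q$ is a graded submodule of a finitely generated graded $\gr_J(R)$-module $Q$.

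For this graded fact, I would invoke graded Nakayama. Since $R$ is Noetherian, the ring $\gr_J(R)$ is Noetherian and $\gr_J(M)$ is finitely generated over it, so $\int(N)$ is itself a finitely generated graded module. Writing $S = \gr_J(R)$ and $S_+ = \bigoplus_{n \geq 1} S_n$, graded Nakayama identifies $d(Q)$ with the largest $n$ for which the graded $R/J$-module $(Q/S_+ Q)_n$ is nonzero, since a homogeneous minimal generating set of $Q$ projects to a minimal generating set of $Q/S_+ Q$ degree by degree. The natural graded surjection
\[
Q/S_+ Q \;\twoheadrightarrow\; (Q/P)/S_+(Q/P)
\]
can only decrease the set of degrees in which the module is nonzero, giving $d(Q/P) \leq d(Q)$. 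Combined with the identifications above, this concludes the proof.

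The main technical point, rather than a serious obstacle, is to correctly phrase the graded Nakayama step: one should \emph{not} simply take a minimal generating set of $Q$ and project it into $Q/P$, because the images may fail to remain minimal and a casual generator-count could be misleading. Instead one must argue through the intrinsic description $d(Q) = \max\{n : (Q/S_+ Q)_n \neq 0\}$, which behaves well under graded surjections. With this framework in place, the remainder of the argument is pure bookkeeping with the correspondences provided by Proposition~\ref{ar and int} and Lemma~\ref{int quoetient}.
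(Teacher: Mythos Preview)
Your proof is correct and follows the natural route suggested by the surrounding results: the paper itself defers to \cite[Lemma~2.6]{QT} without giving an argument, but the tools it has just recorded---Lemma~\ref{int quoetient} and Proposition~\ref{ar and int}---are precisely the bridge you use, reducing the claim to the graded inequality $d(Q/P)\le d(Q)$, which you then handle cleanly via the intrinsic description $d(Q)=\max\{n:(Q/S_+Q)_n\neq 0\}$ and the obvious surjection. Your caution about not merely projecting a minimal generating set is well placed; the argument through $Q/S_+Q$ is the right way to make it airtight.
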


\begin{Definition}
	Let \( M \) be a finitely generated \( R \)-module. A sequence \( x_1,\dots,x_r \in R \) is called a \( J \)-filter regular sequence on \( M \) if, for each \( i = 1,\dots,r \), the element \( x_i \) is not contained in any associated prime of \( M/(x_1,\dots,x_{i-1})M \) that is not contained in \( V(J) \).
\end{Definition}

\noindent
Note that a sequence \( x_1,\dots,x_r \in R \) is \( J \)-filter regular on \( M \) if and only if
\[
a_J\left( \frac{(x_1,\dots,x_{i-1})M : x_i}{(x_1,\dots,x_{i-1})M} \right) < \infty
\quad \text{for every } 1 \leq i \leq r.
\]

For $r=1$ we have the following result.

\begin{Proposition}\label{one element} \textup{(See \cite[Proposition 3.2]{QT})}
	Suppose $M$ is a finitely generated $R$-module, and let $x$ be a $J$-filter regular element on $M$. Set
	\[
	N = \max\left\{ a_J(0 :_M x),\ \ar_J(xM) + 1 \right\}.
	\]
	For every $\varepsilon \in \mathfrak{m}^N$, denote $x' = x + \varepsilon$. Then:
	\begin{enumerate}
		\item[\rm (i)] $x'$ is a $J$-filter regular element on $M$, and
		\[
		(0 :_M x) = (0 :_M x').
		\]
		
		\item[\rm (ii)] $\int(xM) = \int(x'M)$.
	\end{enumerate}
\end{Proposition}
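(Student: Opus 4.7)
For part (i), the plan is to show $(0:_M x) = (0:_M x')$ by two inclusions; the $J$-filter regularity of $x'$ then follows since the equal annihilator inherits finite $J$-Loewy length. The forward inclusion is immediate: if $xy = 0$, then $J^N\cdot(0:_M x) = 0$ (by $N \geq a_J(0:_M x)$) together with $\varepsilon \in J^N$ force $\varepsilon y = 0$, so $x'y = xy + \varepsilon y = 0$. For the reverse, let $z \in 0:_M x'$, so $xz = -\varepsilon z \in J^N M \cap xM$. Setting $c := \ar_J(xM)$, Proposition~\ref{ar and int} gives $c \leq N-1$, so the Artin--Rees property yields $xz = xw_1$ for some $w_1 \in J^{N-c}M$; hence $z - w_1 \in 0:_M x \subseteq 0:_M x'$, whence $w_1 \in 0:_M x'$. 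Iterating this step produces elements $w_k \in J^{k(N-c)}M$ with $z - w_k \in 0:_M x$ for every $k$, and therefore
\[
z \in \bigcap_k \bigl(J^{k(N-c)}M + 0:_M x\bigr) = 0:_M x,
\]
where the equality uses Krull's intersection theorem applied to $M/(0:_M x)$.

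For part (ii), we establish $\int(xM) = \int(x'M)$. The central observation is that the two graded pieces agree in every degree $i \leq N - 1$: given $xm \in J^i M$, we have $x'm = xm + \varepsilon m$ with $\varepsilon m \in J^N M \subseteq J^{i+1}M$, so $(x'm)^* = (xm)^*$ in $J^iM/J^{i+1}M$; exchanging $x$ and $x'$ gives the reverse containment in each such degree. Since Proposition~\ref{ar and int} forces $\int(xM)$ to be generated in degrees $\leq c \leq N-1$, one obtains
\[
\int(xM) = \gr_J(R)\cdot\int(xM)_{\leq N-1} = \gr_J(R)\cdot\int(x'M)_{\leq N-1} \subseteq \int(x'M).
\]

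For the reverse containment, the plan is to prove $\ar_J(x'M) \leq N - 1$, which by Proposition~\ref{ar and int} forces $\int(x'M)$ to be generated in degrees $\leq N - 1$ and then to coincide with $\int(xM)$ via the low-degree comparison. This reduces to analyzing $J^n M \cap x'M$ for $n \geq N$: writing $x'm = xm + \varepsilon m$, one transfers the Artin--Rees decomposition of $xm$ inside $xM$ over to $x'M$, then absorbs the resulting error term $\varepsilon \eta \in J^N M \cap x'M$ by an iterative procedure paralleling part (i), crucially using $(0:_M x) = (0:_M x')$ to correct elements modulo the common kernel. I expect this bookkeeping to be the main obstacle: the symmetry between $x$ and $x'$ is not available a priori, since the bound $\ar_J(xM) \leq N-1$ is given but $\ar_J(x'M)$ is precisely the quantity we must control, so the asymmetric direction demands its own iterative argument supported by part (i).
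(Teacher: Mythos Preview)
The paper does not supply its own proof of this proposition; it is quoted with the attribution ``See \cite[Proposition~3.2]{QT}'' and no proof environment follows. So there is nothing in the present paper to compare your argument against beyond the statement itself.

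On the substance of your sketch: note first a typographical slip in the paper's statement. It writes $\varepsilon \in \mathfrak{m}^N$, but Remark~\ref{multiply in Hom}, Proposition~\ref{two elements}, and the main theorem all take $\varepsilon \in J^N$, and your argument tacitly does the same. This is the correct hypothesis: with only $\varepsilon \in \mathfrak{m}^N$ and $J \subsetneq \mathfrak{m}$ the very first step of (i)---concluding $\varepsilon y = 0$ from $y \in 0:_M x$ via $J^N(0:_M x)=0$---would fail.

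Your proof of (i) is correct and complete. For (ii), the inclusion $\int(xM)\subseteq\int(x'M)$ is fine as written. For the reverse, the plan you outline works, and the ``bookkeeping'' you flag is not a genuine obstacle. Concretely, it is a bit cleaner to argue degree by degree rather than via $\ar_J(x'M)$: given $x'm\in J^nM$ with $n\ge N$, one has $xm=x'm-\varepsilon m\in J^NM\cap xM$, so Artin--Rees for $xM$ gives $xm=xm_1$ with $m_1\in J^{N-c}M$; then $m-m_1\in 0:_Mx=0:_Mx'$ forces $x'm=x'm_1$. Iterating produces $m_k\in J^{k(N-c)}M$ with $x'm=x'm_k$, and as soon as $N+k(N-c)>n$ one has $\varepsilon m_k\in J^{n+1}M$, whence $xm_k\equiv x'm_k\pmod{J^{n+1}M}$ lies in $J^nM$ and represents the same class. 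This is exactly your ``iterative procedure paralleling part (i)'', and it terminates because $N-c\ge 1$; no asymmetry problem actually arises once (i) is in hand.
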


\begin{Remark}\label{multiply in Hom}
	With the notation as in Proposition~\ref{one element}, let \( \overline{M} = M / (0 :_M x) \). Then, for every \( \varepsilon \in J^N \), the map
	\[
	\begin{aligned}
		\varphi_\varepsilon : \overline{M} &\longrightarrow M \\
		\bar{a} &\longmapsto \varepsilon a
	\end{aligned}
	\]
	is a well-defined element of \( \Hom_R(\overline{M}, M) \).
\end{Remark}

For a $J$-filter regular sequence of length two, we have the following result.
\begin{Proposition}\label{two elements} \textup{(See \cite[Proposition 3.4]{QT})}
Suppose $M$ is a finitely generated $R$-module, and let $x_1, x_2$ be a $J$-filter regular sequence on $M$. Define
$$a_1 = a_J(0:_M x_1), \quad a_2 = a_J\left( \frac{x_1M : x_2}{x_1M} \right).$$
Set 
$$N = \max\{ a_1 + a_2, \ar_J(x_1M), \ar_J((x_1,x_2)M) \} + 1.$$
For every $\varepsilon \in J^N$, let $x_1' = x_1 + \varepsilon$. Then
\begin{enumerate}
	\item[\rm (i)] $x_1', x_2$ is a $J$-filter regular sequence on $M$ with $a_J(0:_M x_1') = a_1$, and
	 $$a_J\left( \frac{x_1'M : x_2}{x_1'M} \right) \leq 2a_2.$$
     \item[\rm (ii)] $\int((x_1,x_2)M) = \int((x_1',x_2)M)$.
\end{enumerate}
\end{Proposition}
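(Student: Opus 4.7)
The plan is to first dispatch part~(ii) via a degree-by-degree comparison of initial modules, then prove part~(i) by a two-stage Artin--Rees argument inside $\bar M := M/x_1 M$ which naturally produces the factor of $2$ in the bound $2a_2$.

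\textbf{Setup via Proposition~\ref{one element}.} Since $N \geq \max\{a_1,\ \ar_J(x_1M)+1\}$, Proposition~\ref{one element} applies to $x_1$ and yields: $x_1'$ is $J$-filter regular on $M$, $(0:_M x_1')=(0:_M x_1)$ (whence $a_J(0:_M x_1')=a_1$), and $\int(x_1M)=\int(x_1'M)$. In particular $\ar_J(x_1'M)=\ar_J(x_1M) \leq N-1$ by Proposition~\ref{ar and int}, and Remark~\ref{multiply in Hom} applies because $N \geq a_1$.

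\textbf{Part (ii).} The key observation is that for any $z = x_1 a + x_2 b \in (x_1,x_2)M \cap J^d M$ with $d \leq N-1$, the perturbed element $z' := x_1' a + x_2 b$ lies in $(x_1', x_2)M$ and satisfies $z - z' = -\varepsilon a \in J^N M \subseteq J^{d+1}M$, so their initial forms agree in degree $d$. By Proposition~\ref{ar and int}, $\int((x_1,x_2)M)$ is generated in degrees $\leq N-1$, giving $\int((x_1,x_2)M) \subseteq \int((x_1',x_2)M)$. The reverse inclusion is symmetric: running the same argument with $x_1 = x_1' + (-\varepsilon)$ yields the degree-wise equality $\int((x_1,x_2)M)_d = \int((x_1',x_2)M)_d$ for $d \leq N-1$, which forces $\int((x_1',x_2)M)$ to also be generated in those degrees, closing the argument.

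\textbf{Part (i).} Let $y \in (x_1'M:x_2)$ and write $x_2 y = x_1' v$. In $\bar M := M/x_1M$ we have $x_2 \bar y = \overline{\varepsilon v} \in J^N \bar M$. By Lemma~\ref{ar quotient}, $c := \ar_J(x_2\bar M) \leq \ar_J((x_1,x_2)M) \leq N-1$, so the Artin--Rees property furnishes $\bar z \in J^{N-c}\bar M$ with $x_2 \bar z = x_2 \bar y$; since $\bar y - \bar z \in (x_1M:x_2)/x_1M$ is killed by $J^{a_2}$, we deduce
\[
J^{a_2} y \subseteq x_1 M + J^{a_2+N-c}M.
\]
To double the exponent, fix $\lambda \in J^{a_2}$ and write $\lambda y = x_1 m + r$ with $r \in J^{a_2+N-c}M$. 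A direct computation gives
\[
x_2 r = x_1(\lambda v - x_2 m) + \varepsilon \lambda v,
\]
so $x_2 \bar r = \overline{\varepsilon \lambda v} \in J^{N+a_2}\bar M$. Running the same Artin--Rees/$J^{a_2}$-torsion step on $\bar r$ yields $J^{a_2}\bar r \subseteq J^{2a_2+N-c}\bar M$, and therefore $J^{2a_2} y \subseteq x_1 M + J^{2a_2+N-c}M$. Finally, using $x_1 M + J^n M = x_1' M + J^n M$ for $n \leq N$ (immediate from $\varepsilon \in J^N$) and iterating the whole argument on residuals of the form $\mu y - x_1' m'' \in (x_1'M:x_2) \cap J^N M$, I would push the higher-order error arbitrarily deep in the $J$-adic filtration so that Krull's intersection theorem in $M/x_1'M$ forces $J^{2a_2} y \subseteq x_1' M$; the finiteness of this bound also verifies that $x_1', x_2$ is $J$-filter regular on $M$.

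\textbf{Main obstacle.} The subtlest point is closing the loop in part~(i): after the doubling, the approximation $J^{2a_2} y \subseteq x_1 M + J^{2a_2+N-c}M$ does not automatically land inside $x_1' M$ when $c > 2a_2$, and an honest iteration on residuals is required to make the error arbitrarily small modulo $x_1'M$. The hypothesis $N \geq a_1 + a_2 + 1$ supplies the necessary slack and, combined with Remark~\ref{multiply in Hom}, ensures the perturbation remains innocuous on $(0:_M x_1)$ throughout. The equality of initial modules in part~(ii) is milder but still needs a symmetric argument to clinch the reverse inclusion.
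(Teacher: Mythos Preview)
The paper does not prove this proposition; it is quoted from \cite[Proposition~3.4]{QT} without argument, so there is no ``paper's own proof'' to compare against. Evaluating your sketch on its merits, both parts contain genuine gaps.

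\textbf{Part (ii).} Your argument correctly gives the degree-wise equality $\int((x_1,x_2)M)_d=\int((x_1',x_2)M)_d$ for $d\le N-1$, and hence the inclusion $\int((x_1,x_2)M)\subseteq\int((x_1',x_2)M)$. But the claim that this ``forces $\int((x_1',x_2)M)$ to also be generated in those degrees'' is a non sequitur: two graded submodules that agree in degrees $\le N-1$, one of which is generated there, need not be equal---the other may acquire new generators in degrees $\ge N$. The symmetry you invoke would need $\ar_J((x_1',x_2)M)\le N-1$, which is exactly what remains to be proved. The argument in \cite{QT} handles this via Proposition~\ref{one element} applied to $x_1$ on $M/x_2M$ (using Lemma~\ref{x1} to verify the hypotheses), which yields $(x_2M:x_1)=(x_2M:x_1')$ and $\int(x_1(M/x_2M))=\int(x_1'(M/x_2M))$ directly, then lifts back via Lemma~\ref{int quoetient}.

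\textbf{Part (i).} Your two-step Artin--Rees argument correctly yields $J^{2a_2}y\subseteq x_1M+J^{2a_2+N-c}M$, and more generally $J^{ka_2}y\subseteq x_1M+J^{ka_2+N-c}M$ for every $k\ge1$. The difficulty is the passage to $x_1'M$. The identity $x_1M+J^nM=x_1'M+J^nM$ holds only for $n\le N$; once the error term sits in $J^nM$ with $n>N$ you can no longer swap $x_1M$ for $x_1'M$, and if $n\le N$ the iteration does not shrink the error further (each pass returns you to $x_1'M+J^NM$). So the proposed Krull-type limit does not close. The route taken in \cite{QT} avoids this entirely: once $(x_2M:x_1')=(x_2M:x_1)$ is known (from Proposition~\ref{one element} on $M/x_2M$), the isomorphism of Lemma~\ref{cong} transports the bound $a_2$ on $(x_1M:x_2)/(x_1M+(0:_Mx_2))$ to the same bound on $(x_1'M:x_2)/(x_1'M+(0:_Mx_2))$, and combining this with $a_J\big((x_1'M+(0:_Mx_2))/x_1'M\big)\le a_2$ gives the factor~$2$.
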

We conclude this section with some well-known properties of Betti and Bass numbers.

\begin{Proposition} \label{dim}
	Suppose \( M \) is a finitely generated \( R \)-module. Denote by \( \pd_R(M) \) and \( \id_R(M) \) the projective dimension and injective dimension of \( M \), respectively. Then:
	\begin{enumerate}\setlength{\itemsep}{0.5em}
		\item[\rm (i)] \( \pd_R(M) = \sup \{ i \mid \beta^R_i(M) > 0 \} \).
		
		\item[\rm (ii)] \( \id_R(M) = \sup \{ i \mid \mu^i_R(M) > 0 \} \).
		
		\item[\rm (iii)] \( \depth_R(M) = \inf \{ i \mid \mu^i_R(M) > 0 \} \).
	\end{enumerate}
\end{Proposition}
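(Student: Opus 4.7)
All three statements are classical, and I would prove them by appealing to the existence of minimal free and injective resolutions over the local ring $(R, \mm)$, together with the Rees characterization of depth via Ext.

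For (i) and (ii), the key tool is the existence of minimal resolutions. If $F_\bullet \to M$ is a minimal free resolution with $F_j = R^{b_j}$, then the differentials of $F_\bullet \otimes_R k$ vanish by minimality, so $\beta_j^R(M) = \dim_k \Tor_j^R(k, M) = b_j$; this immediately gives $\pd_R(M) = \sup\{j \mid F_j \neq 0\} = \sup\{j \mid \beta_j^R(M) > 0\}$. Part (ii) follows symmetrically from a minimal injective resolution $I^\bullet$ of $M$: minimality forces the differentials of $\Hom_R(k, I^\bullet)$ to vanish, so $\mu_R^j(M)$ equals the multiplicity of the injective hull $E_R(k)$ appearing in $I^j$, whence $\id_R(M) = \sup\{j \mid \mu_R^j(M) > 0\}$.

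For (iii), I would induct on $d := \depth_R(M)$. If $d = 0$, then $\mm \in \Ass_R(M)$ yields an injection $k \hookrightarrow M$, hence $\mu_R^0(M) \neq 0$. If $d \geq 1$, choose an $M$-regular element $x \in \mm$ and apply $\Ext_R^\bullet(k, -)$ to the short exact sequence $0 \to M \xrightarrow{x} M \to M/xM \to 0$. Since $\mm$ annihilates $k$, each $\Ext_R^j(k, M)$ is a $k$-vector space on which multiplication by $x \in \mm$ acts as zero; the long exact sequence therefore decomposes into short exact sequences
$$0 \to \Ext_R^j(k, M) \to \Ext_R^j(k, M/xM) \to \Ext_R^{j+1}(k, M) \to 0,$$
which gives $\mu_R^j(M/xM) = \mu_R^j(M) + \mu_R^{j+1}(M)$. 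Since $\depth_R(M/xM) = d - 1$, the inductive hypothesis says $\mu_R^j(M/xM) = 0$ for $j < d - 1$ while $\mu_R^{d-1}(M/xM) > 0$; feeding this into the formula forces $\mu_R^j(M) = 0$ for $j < d$ and $\mu_R^d(M) > 0$, so $\inf\{j \mid \mu_R^j(M) > 0\} = d$.

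No substantial obstacle is expected: the argument relies only on the existence of minimal resolutions (for (i) and (ii)) and on the depth-lowering effect of dividing by an $M$-regular element combined with the vanishing of $x \cdot \Ext_R^\bullet(k, -)$ (for (iii)). The only mildly delicate point is verifying that the connecting homomorphisms in the Ext long exact sequence are zero, which is immediate once one observes that $\Ext_R^\bullet(k, M)$ carries an $R/\mm$-module structure.
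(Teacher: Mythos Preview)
Your proof is correct and follows the standard textbook approach. Note, however, that the paper does not actually prove this proposition: it is stated there as a ``well-known'' property of Betti and Bass numbers, with no proof given. So there is nothing to compare your argument against in the paper itself.

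One small terminological slip: in your final sentence you refer to ``the connecting homomorphisms in the Ext long exact sequence'' being zero, but the maps that vanish are the multiplication-by-$x$ maps $\Ext_R^j(k,M)\xrightarrow{x}\Ext_R^j(k,M)$, not the connecting maps. You had this right earlier in the paragraph; the connecting maps are in fact the surjections in your displayed short exact sequences. This does not affect the validity of the argument.
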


\section{Main results}
This section is devoted to proving that Bass numbers and Betti numbers remain invariant under small perturbations. We begin by establishing several foundational lemmas required for the main result.

\begin{Lemma}\label{Tor}
	Suppose \( M \) is a finitely generated \( R \)-module, and let \( x \in R \) be a \( J \)-filter regular element on \( M \). Denote \( \overline{M} = M / (0 :_M x) \), and set
	\[
	N = \max\left\{ a_J(0 :_M x),\ \ar_J(xM) + 1 \right\} + 1.
	\]
	Let \( E \) be any finitely generated \( R \)-module. For every \( \varepsilon \in J^N \), define the map
	\[
	\begin{aligned}
		\varphi : \overline{M} &\longrightarrow M \\
		\bar{a} &\longmapsto \varepsilon a.
	\end{aligned}
	\]
	Then, for every \( i \geq 0 \), the induced map
	\[
	\varphi^i_* : \Tor^R_i(E, \overline{M}) \longrightarrow \Tor^R_i(E, M)
	\]
	has image contained in \( J \cdot \Tor^R_i(E, M) \).
\end{Lemma}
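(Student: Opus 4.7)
The plan is to decompose $\varepsilon$ as a finite sum of products of an element of $J$ with an element of $J^{N-1}$, and then to exploit the $R$-linearity of $\Tor^R_i(E,-)$ in its morphism argument.

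First, I would verify that $\varphi$ really defines an element of $\Hom_R(\overline{M}, M)$, just as in Remark~\ref{multiply in Hom}: since $N \geq a_J(0 :_M x)$, every $a \in (0 :_M x)$ satisfies $\varepsilon a \in J^N \cdot (0 :_M x) = 0$, so multiplication by $\varepsilon$ descends to $\overline{M}$. Next, because $\varepsilon \in J^N = J \cdot J^{N-1}$, I would fix an expression
\[
\varepsilon = \sum_{j=1}^{s} t_j \varepsilon_j', \qquad t_j \in J,\ \varepsilon_j' \in J^{N-1}.
\]
The key self-improvement observation is that $N-1 \geq a_J(0 :_M x)$ also holds, so the same argument shows that each assignment $\varphi_j : \overline{M} \to M$, $\bar a \mapsto \varepsilon_j' a$, belongs to $\Hom_R(\overline{M}, M)$. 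Combining these gives the identity
\[
\varphi = \sum_{j=1}^{s} t_j \varphi_j \quad \text{in } \Hom_R(\overline{M}, M).
\]

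Finally, the assignment $\psi \mapsto \psi^i_*$ defines an $R$-module homomorphism $\Hom_R(\overline{M}, M) \to \Hom_R(\Tor^R_i(E, \overline{M}), \Tor^R_i(E, M))$, since on the chain level a scalar multiple $r\psi$ induces multiplication of the chain map by $r$ and this survives passage to homology. Applying it to the identity above yields $\varphi^i_* = \sum_{j=1}^{s} t_j (\varphi_j)^i_*$, and therefore
\[
\im(\varphi^i_*) \subseteq \sum_{j=1}^{s} t_j \cdot \Tor^R_i(E, M) \subseteq J \cdot \Tor^R_i(E, M),
\]
which is the desired conclusion.

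There is no serious obstacle in this argument; the entire proof rests on the single observation that $N-1$ is still at least $a_J(0 :_M x)$, so that the intermediate maps $\varphi_j$ are also well-defined and the factorisation $\varphi = \sum_j t_j \varphi_j$ takes place inside $\Hom_R(\overline{M}, M)$. The Artin--Rees term in the definition of $N$ does not appear to be needed for this particular lemma, and is presumably retained for compatibility with the inductive bounds that will be used in the main theorem.
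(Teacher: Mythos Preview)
Your proof is correct and follows essentially the same route as the paper: both write $\varepsilon = \sum s_j t_j$ with $s_j \in J$ and $t_j \in J^{N-1}$, use $N-1 \ge a_J(0:_M x)$ so that each ``multiplication by $t_j$'' map $\overline{M}\to M$ is well-defined, and then conclude by $R$-linearity. The only cosmetic difference is that the paper carries this out explicitly at the chain level on elements of $P_i\otimes_R\overline{M}$, whereas you invoke the $R$-linearity of $\psi\mapsto\psi^i_*$ directly; your remark that the Artin--Rees term in $N$ is not actually used here is also correct.
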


\begin{proof}
	Let
	\[
	P_E: \cdots \rightarrow P_2 \xrightarrow{d_2} P_1 \xrightarrow{d_1} P_0 \xrightarrow{d_0} E \rightarrow 0
	\]
	be a projective resolution of \( E \). Tensoring this resolution with \( \overline{M} \) and with \( M \), we obtain the following two complexes:
	\[
	\cdots \rightarrow P_2 \otimes_R \overline{M} \xrightarrow{d^{*}_{2,\overline{M}}} P_1 \otimes_R \overline{M} \xrightarrow{d^{*}_{1,\overline{M}}} P_0 \otimes_R \overline{M} \rightarrow 0,
	\]
	\[
	\cdots \rightarrow P_2 \otimes_R M \xrightarrow{d^{*}_{2,M}} P_1 \otimes_R M \xrightarrow{d^{*}_{1,M}} P_0 \otimes_R M \rightarrow 0.
	\]
	
	These complexes compute the homology modules \( \Tor^R_i(E, \overline{M}) \) and \( \Tor^R_i(E, M) \), respectively. For each \( i \geq 0 \), the map \( \varphi \) induces a morphism of complexes and hence a map on homology:
	\[
	\varphi^i_* : \Tor^R_i(E, \overline{M}) = \frac{\ker d^*_{i,\overline{M}}}{\im d^*_{i+1,\overline{M}}} \longrightarrow \frac{\ker d^*_{i,M}}{\im d^*_{i+1,M}} = \Tor^R_i(E, M).
	\]
	
	Let \( a \otimes \bar{b} \in \ker d^*_{i,\overline{M}} \), where \( a \in P_i \) and \( \bar{b} \in \overline{M} \). Then \( d_i(a) \otimes \bar{b} = 0 \) in \( P_{i-1} \otimes_R \overline{M} \), so
	\[
	\varphi^i_*\big(a \otimes \bar{b} + \im d^*_{i+1,\overline{M}}\big) = a \otimes \varepsilon b + \im d^*_{i+1,M},
	\]
	where $b \in M$ is a representative of $\bar{b}$.
	
	Since \( \varepsilon \in J^N \), we can write
	\[
	\varepsilon = s_1 t_1 + \cdots + s_n t_n,
	\]
	with \( s_j \in J \) and \( t_j \in J^{N-1} \) for \( 1 \leq j \leq n \). By Remark~\ref{multiply in Hom}, since \( t_j \in J^{N-1} \) and \( N - 1 = \max\{ a_J(0 :_M x),\ \ar_J(xM) + 1 \} \), the map
	\[
	\begin{aligned}
		T_j: \overline{M} &\longrightarrow M \\
		\bar{a} &\longmapsto t_j a
	\end{aligned}
	\]
	lies in \( \Hom_R(\overline{M}, M) \). Therefore, combining this with the fact that \( d_i(a) \otimes \bar{b} = 0 \), we have
	\[
	d^*_{i,M}(a \otimes t_j b) = d_i(a) \otimes t_j b = d_i(a) \otimes T_j(\bar{b}) = 0 \quad \text{in} \quad P_{i-1} \otimes_R M,
	\]
	so \( a \otimes t_j b \in \ker d^*_{i,M} \) for each \( j \). It follows that
	\[
	a \otimes \varepsilon b = \sum_{j=1}^n s_j (a \otimes t_j b) \in J \cdot \ker d^*_{i,M}.
	\]
	
	Hence, the image of \( \varphi^i_* \) lies in \( J \cdot \ker d^*_{i,M} \), and thus
	\[
	\im(\varphi^i_*) \subseteq J \cdot \Tor^R_i(E, M),
	\]
	as required.
\end{proof}

We now state an analogous result for the $\Ext$ functor.

\begin{Lemma}\label{Ext}
	Suppose \( M \) is a finitely generated \( R \)-module, and let \( x \in R \) be a \( J \)-filter regular element on \( M \). Denote \( \overline{M} = M / (0 :_M x) \), and set
	\[
	N = \max\left\{ a_J(0 :_M x),\ \ar_J(xM) + 1 \right\} + 1.
	\]
	Let \( E \) be an arbitrary finitely generated \( R \)-module. For every \( \varepsilon \in J^N \), define the map
	\[
	\begin{aligned}
		\varphi : \overline{M} &\longrightarrow M \\
		\bar{a} &\longmapsto \varepsilon a.
	\end{aligned}
	\]
	Then, for every \( i \geq 0 \), the induced map
	\[
	\varphi^*_i : \Ext^i_R(E, \overline{M}) \longrightarrow \Ext^i_R(E, M)
	\]
	has image contained in \( J \cdot \Ext^i_R(E, M) \).
\end{Lemma}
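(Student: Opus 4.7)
The plan is to dualize the proof of Lemma~\ref{Tor}, replacing the tensor product with $\Hom$ and homology with cohomology throughout. First I would fix a projective resolution $P_\bullet \to E$ and apply $\Hom_R(-, \overline{M})$ and $\Hom_R(-, M)$ to obtain two cochain complexes whose cohomologies compute $\Ext^i_R(E, \overline{M})$ and $\Ext^i_R(E, M)$, respectively. Post-composition with $\varphi$ gives a chain map between them and hence induces the desired map $\varphi^*_i$ on $\Ext^i$.

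Next I would take a cocycle $f \in \Hom_R(P_i, \overline{M})$, so that $f \circ d_{i+1} = 0$, and study its image $\varphi \circ f : a \mapsto \varepsilon f(a)$. Decomposing $\varepsilon = s_1 t_1 + \cdots + s_n t_n$ with $s_j \in J$ and $t_j \in J^{N-1}$, I would invoke Remark~\ref{multiply in Hom} (which applies because $N - 1 = \max\{a_J(0 :_M x),\ \ar_J(xM) + 1\}$) to conclude that each multiplication-by-$t_j$ map $T_j : \overline{M} \to M$, $\bar a \mapsto t_j a$, is a well-defined element of $\Hom_R(\overline{M}, M)$. Then $\varphi \circ f = \sum_{j=1}^{n} s_j (T_j \circ f)$, and each $T_j \circ f$ is itself a cocycle since $(T_j \circ f) \circ d_{i+1} = T_j \circ (f \circ d_{i+1}) = 0$. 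Consequently $\varphi \circ f$ lies in $J$ times the module of $i$-cocycles of $\Hom_R(P_\bullet, M)$, and its class in $\Ext^i_R(E, M)$ lies in $J \cdot \Ext^i_R(E, M)$, yielding the claimed containment $\im(\varphi^*_i) \subseteq J \cdot \Ext^i_R(E, M)$.

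I do not anticipate a genuine obstacle here; the argument is essentially formal. The one point requiring verification is the well-definedness of the auxiliary maps $T_j$, which reduces to the vanishing $t_j (0 :_M x) = 0$. This is guaranteed by $t_j \in J^{N-1} \subseteq J^{a_J(0 :_M x)}$ and is exactly what Remark~\ref{multiply in Hom} records. With this in hand, the entire argument is formally dual to the Tor case, with cocycles in $\Hom$ taking the place of cycles in the tensor product.
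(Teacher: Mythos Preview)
Your proposal is correct and matches the paper's proof essentially step for step: the paper also fixes a projective resolution of $E$, applies $\Hom_R(-,\overline{M})$ and $\Hom_R(-,M)$, takes a cocycle $\phi$ with $\phi\circ d_{i+1}=0$, decomposes $\varepsilon=\sum s_j t_j$ with $s_j\in J$ and $t_j\in J^{N-1}$, invokes Remark~\ref{multiply in Hom} to make each $T_j$ well defined, and concludes that $\varphi\circ\phi=\sum s_j(T_j\circ\phi)\in J\cdot\ker d^*_{i+1,M}$. There is nothing to add.
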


\begin{proof}
	Let
	\[
	P_E: \cdots \rightarrow P_2 \xrightarrow{d_2} P_1 \xrightarrow{d_1} P_0 \xrightarrow{d_0} E \rightarrow 0
	\]
	be a projective resolution of \( E \). Applying the functors \( \Hom_R(-, \overline{M}) \) and \( \Hom_R(-, M) \) to \( P_E \), we obtain the following cochain complexes:
	\[
	0 \rightarrow \Hom_R(P_0, \overline{M}) \xrightarrow{d^{*}_{1,\overline{M}}} \Hom_R(P_1, \overline{M}) \xrightarrow{d^{*}_{2,\overline{M}}} \Hom_R(P_2, \overline{M}) \rightarrow \cdots,
	\]
	\[
	0 \rightarrow \Hom_R(P_0, M) \xrightarrow{d^{*}_{1,M}} \Hom_R(P_1, M) \xrightarrow{d^{*}_{2,M}} \Hom_R(P_2, M) \rightarrow \cdots.
	\]
	
	These complexes compute the Ext modules \( \Ext^i_R(E, \overline{M}) \) and \( \Ext^i_R(E, M) \), respectively. For each \( i \geq 0 \), the map \( \varphi \) induces a morphism of complexes and hence induces a map on cohomology:
	\[
	\varphi^*_i : \Ext^i_R(E, \overline{M}) = \frac{\ker d^*_{i+1,\overline{M}}}{\im d^*_{i,\overline{M}}} \longrightarrow \frac{\ker d^*_{i+1,M}}{\im d^*_{i,M}} = \Ext^i_R(E, M).
	\]
	
	Let \( \phi \in \ker d^*_{i+1,\overline{M}} \). Then \( \phi \in \Hom_R(P_i, \overline{M}) \) and \( \phi \circ d_{i+1} = 0 \), so
	\[
	\varphi^*_i(\phi + \im d^*_{i,\overline{M}}) = \varphi \circ \phi + \im d^*_{i,M}.
	\]
	
	Since \( \varepsilon \in J^N \), we can write
	\[
	\varepsilon = s_1 t_1 + \cdots + s_n t_n,
	\]
	with \( s_j \in J \) and \( t_j \in J^{N-1} \) for \( 1 \leq j \leq n \). By Remark~\ref{multiply in Hom}, since \( t_j \in J^{N-1} \) and \( N - 1 = \max\{ a_J(0 :_M x),\ \ar_J(xM) + 1 \} \), the map
	\[
	\begin{aligned}
		T_j: \overline{M} &\longrightarrow M \\
		\bar{a} &\longmapsto t_j a
	\end{aligned}
	\]
	lies in \( \Hom_R(\overline{M}, M) \). Therefore, for each \( j \),
	\[
	d^*_{i+1,M}(T_j \circ \phi) = (T_j \circ \phi) \circ d_{i+1} = 0,
	\]
	so \( T_j \circ \phi \in \ker d^*_{i+1,M} \). It follows that
	\[
	\varphi \circ \phi = \sum_{j=1}^n s_j \cdot (T_j \circ \phi) \in J \cdot \ker d^*_{i+1,M}.
	\]
	
	Hence, the image of \( \varphi^*_i \) lies in \( J \cdot \ker d^*_{i+1,M} \), and thus
	\[
	\im(\varphi^*_i) \subseteq J \cdot \Ext^i_R(E, M),
	\]
	as required.
\end{proof}

From Lemmas~\ref{Tor} and~\ref{Ext}, we obtain the following proposition.

\begin{Proposition}\label{induced maps}
	Suppose \( M \) is a finitely generated \( R \)-module, and let \( x \in R \) be a \( J \)-filter regular element on \( M \). Denote \( \overline{M} = M / (0 :_M x) \), and set
	\[
	N = \max\left\{ a_J(0 :_M x),\ \ar_J(xM) + 1 \right\} + 1.
	\]
	For every \( \varepsilon \in J^N \), define the maps
	\[
	\begin{aligned}
		\varphi  : \overline{M} &\longrightarrow M, & \quad \bar{a} &\longmapsto x a, \\
		\varphi': \overline{M} &\longrightarrow M, & \quad \bar{a} &\longmapsto (x + \varepsilon) a.
	\end{aligned}
	\]
	
	Then, for every \( i \geq 0 \), the following hold:
	\begin{enumerate}
		\item[\rm (i)] The induced maps
		\[
		\varphi^i_*: \Tor^R_i(k,\overline{M}) \rightarrow \Tor^R_i(k,M) \quad \text{and} \quad (\varphi')^i_*: \Tor^R_i(k,\overline{M}) \rightarrow \Tor^R_i(k,M)
		\]
		coincide.
		
		\item[\rm (ii)] The induced maps
		\[
		\varphi^*_i: \Ext^i_R(k,\overline{M}) \rightarrow \Ext^i_R(k,M) \quad \text{and} \quad (\varphi')^*_i: \Ext^i_R(k,\overline{M}) \rightarrow \Ext^i_R(k,M)
		\]
		coincide.
	\end{enumerate}
\end{Proposition}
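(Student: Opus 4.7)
The plan is to write $\varphi' = \varphi + \psi$, where $\psi : \overline{M} \to M$ sends $\bar{a} \mapsto \varepsilon a$. This decomposition is legitimate in $\Hom_R(\overline{M}, M)$: the map $\varphi$ is well defined because $x$ annihilates $0 :_M x$, and by Remark~\ref{multiply in Hom} the map $\psi$ is well defined since $N \geq a_J(0 :_M x)$ forces $\varepsilon \cdot (0 :_M x) = 0$. The sum of these two honest homomorphisms agrees pointwise with $\varphi'$, so at the level of $\Hom_R(\overline{M}, M)$ we genuinely have $\varphi' = \varphi + \psi$.

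Next I would invoke additivity of the induced maps on derived functors. Since $\Tor$ and $\Ext$ are additive in each variable, passing to induced maps yields $(\varphi')^i_* = \varphi^i_* + \psi^i_*$ on $\Tor^R_i(k, -)$ and $(\varphi')^*_i = \varphi^*_i + \psi^*_i$ on $\Ext^i_R(k, -)$. Thus the two conclusions of the proposition reduce to showing that the maps $\psi^i_* : \Tor^R_i(k, \overline{M}) \to \Tor^R_i(k, M)$ and $\psi^*_i : \Ext^i_R(k, \overline{M}) \to \Ext^i_R(k, M)$ both vanish identically.

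For the vanishing, I would apply Lemmas~\ref{Tor} and~\ref{Ext} with $E = k$, which give the containments
\[
\im(\psi^i_*) \subseteq J \cdot \Tor^R_i(k, M), \qquad \im(\psi^*_i) \subseteq J \cdot \Ext^i_R(k, M).
\]
Since $\mm$ annihilates the residue field $k$, it annihilates $\Tor^R_i(k, M)$ and $\Ext^i_R(k, M)$ as well. Because $(R, \mm)$ is local and $J$ is a proper ideal, $J \subseteq \mm$, so both right-hand sides above are zero. This forces $\psi^i_* = 0$ and $\psi^*_i = 0$, which in view of the additive decomposition finishes the proof.

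There is no genuine obstacle here: all the technical content sits inside Lemmas~\ref{Tor} and~\ref{Ext}, and the only bookkeeping needed is the additivity of induced maps on derived functors and the standard observation that $\Tor^R_i(k, M)$ and $\Ext^i_R(k, M)$ are $k$-vector spaces. The proof is essentially a direct corollary of the two preceding lemmas.
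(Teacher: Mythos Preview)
Your proof is correct and follows essentially the same route as the paper: decompose $\varphi' - \varphi$ as the map $\bar a \mapsto \varepsilon a$, apply Lemmas~\ref{Tor} and~\ref{Ext} with $E = k$, and use that $J \cdot \Tor^R_i(k,M) = J \cdot \Ext^i_R(k,M) = 0$ since these are $k$-vector spaces. The only minor addition in your write-up is making explicit the (tacit) assumption that $J$ is proper so that $J \subseteq \mm$.
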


\begin{proof}
	We have
	\[
	\begin{aligned}
		(\varphi' - \varphi) : \overline{M} &\longrightarrow M \\
		\bar{a} &\longmapsto \varepsilon a.
	\end{aligned}
	\]
	By Lemma~\ref{Tor}, the induced map
	\[
	(\varphi' - \varphi)^i_* : \Tor^R_i(k, \overline{M}) \longrightarrow \Tor^R_i(k, M)
	\]
	has image contained in \( J \cdot \Tor^R_i(k, M) \).
	
	Now, \( \Tor^R_i(k, M) \) is a \( k \)-vector space and \( J \subseteq \mathfrak{m} \), so
	\[
	J \cdot \Tor^R_i(k, M) = 0.
	\]
	Hence, \( (\varphi' - \varphi)^i_* = 0 \), which implies \( (\varphi')^i_* = \varphi^i_* \). This proves part (i).
	
	For part (ii), the same argument applies to the induced maps on \( \Ext \). That is, the map
	\[
	(\varphi' - \varphi)^*_i : \Ext^i_R(k, \overline{M}) \longrightarrow \Ext^i_R(k, M)
	\]
	has image contained in \( J \cdot \Ext^i_R(k, M) \), which is also zero since \( \Ext^i_R(k, M) \) is a \( k \)-vector space. Therefore,
	\[
	(\varphi')^*_i = \varphi^*_i.
	\]
\end{proof}

For a \( J \)-filter regular element, we have the following result:
\begin{Proposition}\label{length one}
	Suppose \( M \) is a finitely generated \( R \)-module, and let \( x \in R \) be a \( J \)-filter regular element on \( M \). Set
	\[
	N = \max\left\{ a_J(0 :_M x),\ \ar_J(xM) + 1 \right\} + 1.
	\]
	For every \( \varepsilon \in J^N \), let \( x' = x + \varepsilon \). Then, for each \( j \geq 0 \), we have:
	\begin{enumerate}\setlength{\itemsep}{0.3em}
		\item[\rm (i)] \( \beta^R_j(M/xM) = \beta^R_j(M/x'M) \),
		\item[\rm (ii)] \( \mu^j_R(M/xM) = \mu^j_R(M/x'M) \).
	\end{enumerate}
\end{Proposition}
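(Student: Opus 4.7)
The plan is to realize $M/xM$ and $M/x'M$ as cokernels of two injective maps from a common source into $M$ that induce identical maps on $\Tor^R_\bullet(k,-)$ and $\Ext^\bullet_R(k,-)$, and then read off the invariants from long exact sequences.

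First, by Proposition~\ref{one element}(i) we have $(0:_M x) = (0:_M x')$, so with $\overline{M} = M/(0:_M x)$ the two multiplication maps
\[
\varphi : \overline{M} \to M,\ \ \bar{a}\mapsto xa, \qquad \varphi' : \overline{M} \to M,\ \ \bar{a}\mapsto x'a
\]
are well-defined (the second by Remark~\ref{multiply in Hom}, since $\varepsilon \in J^N$ annihilates $(0:_M x)$). Both are injective: $\varphi(\bar{a})=0$ forces $a \in (0:_M x)$, and $\varphi'(\bar{a})=0$ forces $a \in (0:_M x') = (0:_M x)$. Their cokernels are $M/xM$ and $M/x'M$ respectively, yielding two short exact sequences with the same left term and middle term.

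Next, apply $\Tor^R_\bullet(k,-)$ to both short exact sequences. The resulting long exact sequences split into
\[
0 \to \coker \varphi^j_* \to \Tor^R_j(k, M/xM) \to \ker \varphi^{j-1}_* \to 0,
\]
and the analogue with $\varphi'$ in place of $\varphi$, so
\[
\beta^R_j(M/xM) = \dim_k \coker \varphi^j_* + \dim_k \ker \varphi^{j-1}_*,
\]
with the corresponding formula for $M/x'M$. By Proposition~\ref{induced maps}(i), $\varphi^j_* = (\varphi')^j_*$ for every $j \geq 0$, so these kernels and cokernels agree term by term, proving part (i). For (ii), the same argument applied to $\Ext^\bullet_R(k,-)$, using Proposition~\ref{induced maps}(ii), gives the analogous Bass number identity from short exact sequences of the form $0 \to \coker\varphi^*_j \to \Ext^j_R(k, M/xM) \to \ker\varphi^*_{j+1} \to 0$.

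There is no serious obstacle: the technical heart has been carried by Lemmas~\ref{Tor} and~\ref{Ext}, which ensure that the difference $\varphi' - \varphi$ induces maps landing in $J \cdot \Tor^R_j(k,M)$ and $J \cdot \Ext^j_R(k,M)$, both of which vanish because these are $k$-vector spaces and $J \subseteq \mm$. The only points needing care are the well-definedness and injectivity of $\varphi'$, both of which follow directly from the bound on $N$ and Proposition~\ref{one element}.
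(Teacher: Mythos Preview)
Your proof is correct and follows essentially the same approach as the paper: both use the short exact sequences $0 \to \overline{M} \xrightarrow{\varphi} M \to M/xM \to 0$ and its analogue for $x'$, apply $\Tor^R_\bullet(k,-)$ and $\Ext^\bullet_R(k,-)$, break the resulting long exact sequences into short ones involving $\ker$ and $\coker$ of the induced maps, and then invoke Proposition~\ref{induced maps} to identify those. If anything, you are slightly more careful than the paper in explicitly justifying that $\varphi'$ is well-defined and injective via Proposition~\ref{one element}(i).
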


\begin{proof}
   We first consider the short exact sequence
	\[
	0 \longrightarrow \overline{M} \xrightarrow{\varphi} M \longrightarrow M/xM \longrightarrow 0,
	\]
	where \( \overline{M} = M / (0 :_M x) \) and
	\[
	\begin{aligned}
		\varphi : \overline{M} &\longrightarrow M \\
		\bar{a} &\longmapsto x a.
	\end{aligned}
	\]
	Applying the functor \( \Tor^R_i(k, -) \), we obtain the long exact sequence:
	\begin{align*}
		\cdots \longrightarrow \Tor^R_2(k, M/xM) \longrightarrow \Tor^R_1(k, \overline{M}) 
		&\xrightarrow{\varphi^1_*} \Tor^R_1(k, M) \longrightarrow \Tor^R_1(k, M/xM) \\
		&\longrightarrow k \otimes_R \overline{M} \xrightarrow{\varphi^0_*} k \otimes_R M 
		\longrightarrow k \otimes_R (M/xM) \longrightarrow 0.
	\end{align*}
	From this, for each \( i \geq 0 \), we obtain a short exact sequence
	\[
	0 \longrightarrow \coker \varphi^i_* \longrightarrow \Tor^R_i(k, M/xM) \longrightarrow \ker \varphi^{i-1}_* \longrightarrow 0,
	\]
	where we interpret \( \ker \varphi^{-1}_* = 0 \) when \( i = 0 \).
	
	Similarly, from the short exact sequence
	\[
	0 \longrightarrow \overline{M} \xrightarrow{\varphi'} M \longrightarrow M/x'M \longrightarrow 0,
	\]
	with
	\[
	\begin{aligned}
		\varphi' : \overline{M} &\longrightarrow M \\
		\bar{a} &\longmapsto (x + \varepsilon) a,
	\end{aligned}
	\]
	we obtain the analogous short exact sequence:
	\[
	0 \longrightarrow \coker (\varphi')^i_* \longrightarrow \Tor^R_i(k, M/x'M) \longrightarrow \ker (\varphi')^{i-1}_* \longrightarrow 0.
	\]
	
	By Proposition~\ref{induced maps}, we have \( \varphi^i_* = (\varphi')^i_* \) for all \( i \geq 0 \). It follows that
	\[
	\dim_k \Tor^R_i(k, M/xM) = \dim_k \Tor^R_i(k, M/x'M),
	\]
	which proves part (i).
	
	For part (ii), we apply the contravariant functor \( \Hom_R(k, -) \) to the same short exact sequences. From
	\[
	0 \longrightarrow \overline{M} \xrightarrow{\varphi} M \longrightarrow M/xM \longrightarrow 0,
	\]
	we obtain the long exact sequence:
	\begin{align*}
		\cdots \longrightarrow \Ext^{i-1}_R(k, M/xM) \longrightarrow \Ext^i_R(k, \overline{M}) 
		&\xrightarrow{\varphi^*_i} \Ext^i_R(k, M) \longrightarrow \Ext^i_R(k, M/xM) \\
		&\longrightarrow \Ext^{i+1}_R(k, \overline{M}) \longrightarrow \cdots
	\end{align*}
	which yields the short exact sequence:
	\[
	0 \longrightarrow \coker \varphi^*_i \longrightarrow \Ext^i_R(k, M/xM) \longrightarrow \ker \varphi^*_{i+1} \longrightarrow 0.
	\]
	
	Similarly, applying \( \Hom_R(k, -) \) to the sequence for \( x' \), we get:
	\[
	0 \longrightarrow \coker (\varphi')^*_i \longrightarrow \Ext^i_R(k, M/x'M) \longrightarrow \ker (\varphi')^*_{i+1} \longrightarrow 0.
	\]
	
	Again, by Proposition~\ref{induced maps}, we have \( \varphi^*_i = (\varphi')^*_i \) for all \( i \geq 0 \). It follows that
	\[
	\dim_k \Ext^i_R(k, M/xM) = \dim_k \Ext^i_R(k, M/x'M),
	\]
	and so \( \mu^i_R(M/xM) = \mu^i_R(M/x'M) \), completing the proof.
\end{proof}
Before extending the above result to \( J \)-filter regular sequences of arbitrary length, we establish the following auxiliary lemmas.
\begin{Lemma}\label{cong}
	Suppose \( M \) is a finitely generated \( R \)-module, and let \( x_1, \ldots, x_r \) be a sequence in \( R \), where \( r \geq 2 \). Then, for every \( 2 \leq i \leq r \), we have the following isomorphism:
	\[
	\frac{(x_1,\ldots,x_{i-1})M : x_i}{(x_1,\ldots,x_{i-1})M + \left((x_2,\ldots,x_{i-1})M : x_i\right)} 
	\cong 
	\frac{(x_2,\ldots,x_i)M : x_1}{(x_2,\ldots,x_i)M + \left((x_2,\ldots,x_{i-1})M : x_1\right)},
	\]
	where \( (x_2,\ldots,x_{i-1}) = 0 \) when \( i = 2 \).
\end{Lemma}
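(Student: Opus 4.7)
The plan is to reduce the lemma to a cleaner statement that is manifestly symmetric in $x_1$ and $x_i$. Set $A = (x_2, \ldots, x_{i-1})M$, which is interpreted as $0$ when $i = 2$, and write $\overline{M} = M/A$. Under the canonical projection $M \to \overline{M}$, the submodule $(x_1, \ldots, x_{i-1})M :_M x_i$ corresponds to $x_1\overline{M} :_{\overline{M}} x_i$, and the denominator $(x_1, \ldots, x_{i-1})M + ((x_2, \ldots, x_{i-1})M :_M x_i)$ corresponds to $x_1\overline{M} + (0 :_{\overline{M}} x_i)$. The analogous identifications hold for the right-hand side. Hence the lemma will reduce to establishing the symmetric isomorphism
\[
\frac{x_1\overline{M} :_{\overline{M}} x_i}{x_1\overline{M} + (0 :_{\overline{M}} x_i)} \;\cong\; \frac{x_i\overline{M} :_{\overline{M}} x_1}{x_i\overline{M} + (0 :_{\overline{M}} x_1)}.
\]

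For the symmetric statement, I would define a map $\phi$ from the LHS numerator to the RHS quotient as follows. Given $\bar{m} \in x_1\overline{M} :_{\overline{M}} x_i$, pick $\bar{n} \in \overline{M}$ with $x_i\bar{m} = x_1\bar{n}$, and declare $\phi(\bar{m})$ to be the class of $\bar{n}$. Well-definedness is immediate, since two admissible choices of $\bar{n}$ differ by an element of $0 :_{\overline{M}} x_1$, which is killed in the target. Linearity is clear, and surjectivity holds because any $\bar{n}$ with $x_1\bar{n} \in x_i\overline{M}$ arises from such an $\bar{m}$ by construction. The lemma then follows from the first isomorphism theorem once $\ker\phi$ is identified.

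The crux is the identification $\ker\phi = x_1\overline{M} + (0 :_{\overline{M}} x_i)$. For the inclusion $\supseteq$, if $\bar{m} = x_1\bar{p} + \bar{q}$ with $x_i\bar{q} = 0$, then $x_i\bar{m} = x_1(x_i\bar{p})$, so we may take $\bar{n} = x_i\bar{p} \in x_i\overline{M}$ and thus $\phi(\bar{m}) = 0$. For $\subseteq$, suppose for some admissible $\bar{n}$ one has $\bar{n} = x_i\bar{q} + \bar{r}$ with $x_1\bar{r} = 0$; then $x_1\bar{n} = x_1 x_i\bar{q}$, and combining with $x_i\bar{m} = x_1\bar{n}$ gives $x_i(\bar{m} - x_1\bar{q}) = 0$, whence $\bar{m} \in x_1\overline{M} + (0 :_{\overline{M}} x_i)$.

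The main obstacle is the bookkeeping in the initial reduction: verifying that each of the four submodules (two numerators and two denominators) appearing in the original displayed formula pulls back exactly as claimed under $M \to \overline{M}$, and in particular that $A$ is contained in each of the numerators so that the quotient-module manipulations are legitimate. This is routine but must be done carefully; once completed, the symmetric lemma over $\overline{M}$ is essentially a formal diagram chase using the map $\phi$ above.
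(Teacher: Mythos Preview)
Your proof is correct and follows essentially the same strategy as the paper: reduce to the quotient $\overline{M}=M/(x_2,\ldots,x_{i-1})M$ and then establish the symmetric two-element isomorphism there. The only difference is that the paper handles the base case $i=2$ by citing \cite[Lemma~3.3]{QT}, whereas you supply a direct and self-contained argument via the explicit map $\phi$ and its kernel computation; your version is therefore slightly more complete.
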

\begin{proof}
The case \( i = 2 \) follows by an argument analogous to that of \cite[Lemma~3.3]{QT}.

For \( i \geq 3 \), set $\overline{M} = M/(x_2, \ldots, x_{i-1})M$. Applying the statement to the module \( \overline{M} \) and the elements \( x_1 \) and \( x_i \), we obtain the following isomorphism:
\[
\frac{x_1 \overline{M} : x_i}{x_1 \overline{M} + \left(0 :_{\overline{M}} x_i \right)} 
\cong 
\frac{x_i \overline{M} : x_1}{x_i \overline{M} + \left(0 :_{\overline{M}} x_1 \right)},
\]
which yields the desired result.
\end{proof}

\begin{Lemma}\label{x1}
	Suppose \( M \) is a finitely generated \( R \)-module, and let \( x_1, \ldots, x_r \in R \) be a \( J \)-filter regular sequence on \( M \), where \( r \geq 2 \). For each \( i = 1, \ldots, r \), define
	\[
	a_i = a_J\left( \frac{(x_1,\ldots,x_{i-1})M : x_i}{(x_1,\ldots,x_{i-1})M} \right).
	\]
	Then, for every \( 2 \leq i \leq r \), the element \( x_1 \) is \( J \)-filter regular on \( M_i := M / (x_2, \ldots, x_i)M \), and we have
	\[
	a_J\left( \frac{(x_2,\ldots,x_i)M : x_1}{(x_2,\ldots,x_i)M} \right) \leq a_1 + \cdots + a_i.
	\]
\end{Lemma}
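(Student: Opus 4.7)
The plan is to proceed by induction on $i$, with both the base case $i = 2$ and the inductive step driven by Lemma~\ref{cong}, which rewrites ``colons in one order'' as ``colons in the other order''. The key mechanism is that, once the relevant quotient on the original side has $J$-Loewy length bounded by $a_i$, the isomorphism transfers this bound to the symmetric quotient; a further multiplication by an appropriate power of $J$ then absorbs the ``leftover'' summand in the denominator, using either $J^{a_1}(0:_M x_1)=0$ or the inductive hypothesis.

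For the base case $i=2$, Lemma~\ref{cong} yields the isomorphism
\[
\frac{x_1 M : x_2}{x_1 M + (0 :_M x_2)} \;\cong\; \frac{x_2 M : x_1}{x_2 M + (0 :_M x_1)}.
\]
The left-hand side is a quotient of $(x_1 M : x_2)/x_1 M$, so Lemma~\ref{ar quotient}-style reasoning (or the very definition of $a_J$) bounds its $J$-Loewy length by $a_2$. Hence $J^{a_2}(x_2 M : x_1) \subseteq x_2 M + (0 :_M x_1)$. Multiplying by $J^{a_1}$ and using $J^{a_1}(0 :_M x_1) = 0$ gives $J^{a_1+a_2}(x_2 M : x_1) \subseteq x_2 M$, which simultaneously furnishes the bound and shows that $x_1$ is $J$-filter regular on $M/x_2 M$.

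For the inductive step, assume the result for $i-1$, and apply Lemma~\ref{cong} at index $i$:
\[
\frac{(x_1,\ldots,x_{i-1})M : x_i}{(x_1,\ldots,x_{i-1})M + ((x_2,\ldots,x_{i-1})M : x_i)} \;\cong\; \frac{(x_2,\ldots,x_i)M : x_1}{(x_2,\ldots,x_i)M + ((x_2,\ldots,x_{i-1})M : x_1)}.
\]
The left-hand side is a quotient of $((x_1,\ldots,x_{i-1})M : x_i)/(x_1,\ldots,x_{i-1})M$, so $J^{a_i}$ annihilates the right-hand side, i.e.\
\[
J^{a_i}\bigl((x_2,\ldots,x_i)M : x_1\bigr) \subseteq (x_2,\ldots,x_i)M + \bigl((x_2,\ldots,x_{i-1})M : x_1\bigr).
\]
By the inductive hypothesis, $J^{a_1+\cdots+a_{i-1}}\bigl((x_2,\ldots,x_{i-1})M : x_1\bigr) \subseteq (x_2,\ldots,x_{i-1})M \subseteq (x_2,\ldots,x_i)M$. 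Multiplying the preceding inclusion by $J^{a_1+\cdots+a_{i-1}}$ therefore yields
\[
J^{a_1+\cdots+a_i}\bigl((x_2,\ldots,x_i)M : x_1\bigr) \subseteq (x_2,\ldots,x_i)M,
\]
which is precisely the desired bound and, in particular, shows that $x_1$ is $J$-filter regular on $M_i$.

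The only real obstacle is purely bookkeeping: one must notice that the ``extra'' summand appearing in the denominator on the right-hand side of Lemma~\ref{cong}, namely $(x_2,\ldots,x_{i-1})M : x_1$, is exactly the module whose $J$-Loewy length is controlled by the previous case of the induction. Once this alignment is observed, no further homological or support-theoretic machinery is needed---the proof is an iterated application of Lemma~\ref{cong} together with the defining property of $a_J$.
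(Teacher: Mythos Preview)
Your proof is correct and follows essentially the same route as the paper's: induction on $i$, with Lemma~\ref{cong} transferring the bound $a_i$ from the $(x_1,\ldots,x_{i-1})M:x_i$ side to the $(x_2,\ldots,x_i)M:x_1$ side, followed by multiplication by $J^{a_1+\cdots+a_{i-1}}$ to absorb the residual colon term via the inductive hypothesis. One minor remark: your parenthetical reference to ``Lemma~\ref{ar quotient}-style reasoning'' is slightly misplaced (that lemma concerns Artin--Rees numbers, not Loewy lengths), but your immediate clarification ``or the very definition of $a_J$'' is the correct justification.
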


\begin{proof}
	We proceed by induction on \( i \). For $i=2$, since
	\[
	J^{a_2}(x_1M : x_2) \subseteq x_1M \subseteq x_1M + (0 :_M x_2),
	\]
	it follows from Lemma~\ref{cong} that
	\[
	J^{a_2}(x_2M : x_1) \subseteq x_2M + (0 :_M x_1).
	\]
	Applying \( J^{a_1} \) and using the definition of \( a_1 \), we get
	\[
	J^{a_1 + a_2}(x_2M : x_1) \subseteq x_2M + J^{a_1}(0 :_M x_1) \subseteq x_2M.
	\]
	Thus, \( x_1 \) is \( J \)-filter regular on \( M_2 = M / x_2M \), and
	\[
	a_J\left( \frac{x_2M : x_1}{x_2M} \right) \leq a_1 + a_2.
	\]
	
	Let \( i > 2 \) and set \( s_i = a_1 + \cdots + a_{i-1} \). By the induction hypothesis,
	\[
	a_J\left( \frac{(x_2, \ldots, x_{i-1})M : x_1}{(x_2, \ldots, x_{i-1})M} \right) \leq s_i,
	\]
	which implies
	\[
	J^{s_i}\big((x_2, \ldots, x_{i-1})M : x_1\big) \subseteq (x_2, \ldots, x_{i-1})M.
	\]
	
	From the definition of \( a_i \) and Lemma~\ref{cong}, we have
	\[
	J^{a_i}\big((x_1, \ldots, x_{i-1})M : x_i\big) \subseteq (x_1, \ldots, x_{i-1})M,
	\]
	which yields
	\[
	J^{a_i}\big((x_2, \ldots, x_i)M : x_1\big) \subseteq (x_2, \ldots, x_i)M + \big((x_2, \ldots, x_{i-1})M : x_1\big).
	\]
	
	Applying \( J^{s_i} \) again, we obtain
	\[
	J^{s_i + a_i}\big((x_2, \ldots, x_i)M : x_1\big) \subseteq (x_2, \ldots, x_i)M.
	\]
	Therefore,
	\[
	a_J\left( \frac{(x_2, \ldots, x_i)M : x_1}{(x_2, \ldots, x_i)M} \right) \leq s_i + a_i = a_1 + \cdots + a_i,
	\]
	and hence \( x_1 \) is \( J \)-filter regular on \( M_i = M / (x_2, \ldots, x_i)M \).
\end{proof}

We now present the main result of this paper, which generalizes Proposition~\ref{length one} to the setting of \( J \)-filter regular sequences of arbitrary length.

\begin{Theorem}\label{main-theorem}
	Suppose \( M \) is a finitely generated \( R \)-module, and let \( x_1, \ldots, x_r \in R \) be a \( J \)-filter regular sequence on \( M \). For each \( i = 1, \ldots, r \), define
	\[
	a_i = a_J\left( \frac{(x_1,\ldots,x_{i-1})M : x_i}{(x_1,\ldots,x_{i-1})M} \right).
	\]
	Set
	\[
	N = \max\left\{ a_1 + 2a_2 + \cdots + 2^{r-1}a_r,\ \ar_J(x_1M),\ \ldots,\ \ar_J((x_1,\ldots,x_r)M) \right\} + 2.
	\]
	For every \( \varepsilon_1, \ldots, \varepsilon_r \in J^N \), let \( x_i' = x_i + \varepsilon_i \) for \( i = 1, \ldots, r \). Then, for each \( j \geq 0 \), we have:
	\begin{enumerate}\setlength{\itemsep}{0.5em}
		\item[\rm (i)] \( \beta^R_j\left(M / (x_1, \ldots, x_r)M\right) = \beta^R_j\left(M / (x_1', \ldots, x_r')M\right) \),
		\item[\rm (ii)] \( \mu^j_R\left(M / (x_1, \ldots, x_r)M\right) = \mu^j_R\left(M / (x_1', \ldots, x_r')M\right) \).
	\end{enumerate}
\end{Theorem}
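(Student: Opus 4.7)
The plan is to prove both (i) and (ii) by induction on the length $r$ of the filter regular sequence. The base case $r=1$ is immediate from Proposition~\ref{length one}, since the value of $N$ prescribed in the theorem dominates the threshold required there.

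For the inductive step, I would first change $x_1$ to $x_1'$ while keeping $x_2,\dots,x_r$ fixed, and then invoke the induction hypothesis on the quotient $M' := M/x_1'M$ to upgrade $x_2,\dots,x_r$ to $x_2',\dots,x_r'$. To justify the first move, apply Proposition~\ref{length one} to the module $\widetilde M := M/(x_2,\dots,x_r)M$ with the single element $x_1$. By Lemma~\ref{x1}, $x_1$ is $J$-filter regular on $\widetilde M$ with $a_J(0:_{\widetilde M} x_1) \le a_1+\dots+a_r$, and by Lemma~\ref{ar quotient}, $\ar_J(x_1\widetilde M) \le \ar_J((x_1,\dots,x_r)M)$; both bounds are dominated by $N-1$, so the proposition yields
\[
\beta^R_j(M/(x_1,\dots,x_r)M) = \beta^R_j(M/(x_1',x_2,\dots,x_r)M),
\]
together with the analogous identity for Bass numbers.

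To apply the induction hypothesis to the length-$(r-1)$ sequence $(x_2,\dots,x_r)$ on $M'$, three things must be checked: filter regularity on $M'$, control of the invariants $b_i := a_J\!\left(\tfrac{(x_1',x_2,\dots,x_{i-1})M:x_i}{(x_1',x_2,\dots,x_{i-1})M}\right)$, and control of the Artin--Rees numbers $\ar_J((x_2,\dots,x_i)M')$. The heart of the argument is showing $b_i \le 2a_i$; I would obtain this by applying Proposition~\ref{two elements} to the pair $(x_1,x_i)$ acting on $M/(x_2,\dots,x_{i-1})M$, whose invariants $\tilde a_1 \le a_1+\dots+a_{i-1}$ (by Lemma~\ref{x1}) and $\tilde a_2 = a_i$ are both absorbed into $N$. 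The bound $b_i \le 2a_i$ then produces exactly the doubling pattern $b_2+2b_3+\dots+2^{r-2}b_r \le 2a_2+\dots+2^{r-1}a_r$, which is dominated by the sum inside $N$.

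The main obstacle is the control of the Artin--Rees numbers: one needs $\ar_J((x_2,\dots,x_i)M') \le N-2$, which by Lemma~\ref{ar quotient} reduces to bounding $\ar_J((x_1',x_2,\dots,x_i)M)$ in terms of $\ar_J((x_1,\dots,x_i)M)$. Since $\varepsilon_1 \in J^N$ has initial degree far exceeding every $\ar_J((x_1,\dots,x_i)M)$, I expect the initial forms of $(x_1',x_2,\dots,x_i)M$ and $(x_1,x_2,\dots,x_i)M$ in $\gr_J(M)$ to coincide, extending Proposition~\ref{two elements}(ii), so that Proposition~\ref{ar and int} then gives the desired equality of Artin--Rees numbers. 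Once these verifications are in place, the induction hypothesis applied to $(x_2,\dots,x_r)$ on $M'$ yields
\[
\beta^R_j(M/(x_1',x_2,\dots,x_r)M) = \beta^R_j(M/(x_1',x_2',\dots,x_r')M),
\]
and chaining with the earlier equality completes part (i); part (ii) follows verbatim using the Bass number halves of Propositions~\ref{length one} and~\ref{induced maps}.
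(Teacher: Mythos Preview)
Your proposal is correct and follows essentially the same route as the paper: induction on $r$, with the single-element swap $x_1 \to x_1'$ handled by Proposition~\ref{length one} on $M/(x_2,\dots,x_r)M$ (via Lemma~\ref{x1} and Lemma~\ref{ar quotient}), and the remaining swaps by the induction hypothesis on $M/x_1'M$, where the doubling bound $b_i \le 2a_i$ and the preservation of Artin--Rees numbers both come from Proposition~\ref{two elements} applied to the pair $(x_1,x_i)$ on $M/(x_2,\dots,x_{i-1})M$. The only detail you leave implicit is that passing from $\int((x_1',x_i)M_{i-1}) = \int((x_1,x_i)M_{i-1})$ to $\int((x_1',x_2,\dots,x_i)M) = \int((x_1,\dots,x_i)M)$ uses Lemma~\ref{int quoetient}; with that in hand Proposition~\ref{ar and int} gives exactly the Artin--Rees equality you anticipate.
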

\begin{proof}
We proceed by induction on \( r \). The case \( r = 1 \) follows from Proposition~\ref{length one}.

Now assume \( r > 2 \). For each \( 2 \leq i \leq r - 1 \), by Lemma~\ref{x1}, the element \( x_1 \) is \( J \)-filter regular on \( M_i := M / (x_2, \ldots, x_i)M \), and we have:
\[
a_J\left( 0 :_{M_i} x_1 \right) \leq a_1 + \cdots + a_i.
\]
Since \( x_{i+1} \) is \( J \)-filter regular on \( M_i / x_1M_i \), it follows that the sequence \( x_1, x_{i+1} \) is \( J \)-filter regular on \( M_i \), and:
\[
a_J\left( \frac{x_1 M_i : x_{i+1}}{x_1 M_i} \right) = a_{i+1}.
\]
By Lemma~\ref{ar quotient}, we have:
\[
\ar_J(x_1M_i) \leq \ar_J((x_1,\ldots,x_i)M), \quad \text{and} \quad \ar_J((x_1,x_{i+1})M_i) \leq \ar_J((x_1,\ldots,x_{i+1})M).
\]
Thus,
\[
N \geq \left\{ a_J\left( 0 :_{M_i} x_1 \right) + a_J\left( \frac{x_1 M_i : x_{i+1}}{x_1 M_i} \right),\ \ar_J(x_1M_i),\ \ar_J((x_1,x_{i+1})M_i) \right\}.
\]
Therefore, applying Proposition~\ref{two elements} to the sequence \( x_1, x_{i+1} \) on \( M_i \), we have that \( x_{i+1} \) is \( J \)-filter regular on \( M / (x_1', x_2, \ldots, x_i)M \), and:
\[
a_J\left( \frac{(x_1', x_2, \ldots, x_i)M : x_{i+1}}{(x_1', x_2, \ldots, x_i)M} \right) \leq 2a_{i+1}.
\]
It follows that the sequence \( x_1', x_2, \ldots, x_r \) is \( J \)-filter regular on \( M \). By Proposition~\ref{two elements}, we also have:
\[
\int((x_1', x_{i+1})M_i) = \int((x_1, x_{i+1})M_i).
\]
Hence, by Lemma~\ref{int quoetient}, it follows that:
\[
\int((x_1', x_2, \ldots, x_{i+1})M) = \int((x_1, x_2, \ldots, x_{i+1})M).
\]
From Proposition~\ref{ar and int}, we conclude:
\[
\ar_J((x_1', x_2, \ldots, x_{i+1})M) = \ar_J((x_1, x_2, \ldots, x_{i+1})M).
\]

Since \( x_1', x_2, \ldots, x_r \) is \( J \)-filter regular on \( M \), it follows that \( x_2, \ldots, x_r \) is \( J \)-filter regular on \( M' := M / x_1'M \). Set:
\[
a_1' := a_J(0 :_{M'} x_2), \quad \text{and} \quad a_i' := a_J\left( \frac{(x_2, \ldots, x_i)M' : x_{i+1}}{(x_2, \ldots, x_i)M'} \right) \quad \text{for } 2 \leq i \leq r - 1.
\]
By Proposition~\ref{two elements} and the argument above, we have:
\[
N \geq \max\left\{ a_1' + 2a_2' + \cdots + 2^{r-2}a_{r-1}',\ \ar_J(x_2M'),\ \ldots,\ \ar_J((x_2, \ldots, x_r)M') \right\} + 2.
\]
Hence, by the induction hypothesis, we obtain:
\[
\beta^R_j\left( M' / (x_2, \ldots, x_r)M' \right) = \beta^R_j\left( M' / (x_2', \ldots, x_r')M \right),
\]
and
\[
\Tor^R_j\left( M' / (x_2, \ldots, x_r)M' \right) = \Tor^R_j\left( M' / (x_2', \ldots, x_r')M \right),
\]
for every \( j \geq 0 \). This is equivalent to:
\[
\beta^R_j\left( M / (x_1', x_2, \ldots, x_r)M \right) = \beta^R_j\left( M / (x_1', x_2', \ldots, x_r')M \right),
\]
and
\[
\Tor^R_j\left( M / (x_1', x_2, \ldots, x_r)M \right) = \Tor^R_j\left( M / (x_1', x_2', \ldots, x_r')M \right),
\]
for every \( j \geq 0 \).

Now, from Proposition~\ref{x1}, the element \( x_1 \) is \( J \)-filter regular on \( M_r = M / (x_2, \ldots, x_r)M \), and 
\[
a_J(0:_{M_r}x_1) \leq a_1 + \cdots + a_r.
\]
Moreover, \( \ar_J(x_1M_r) \leq \ar_J((x_1,\ldots,x_r)M) \). Hence, by Proposition~\ref{length one}, we have:
\[
\beta^R_j\left( M_r / x_1M_r \right) = \beta^R_j\left( M_r / x_1'M_r \right),
\]
and
\[
\Tor^R_j\left( M_r / x_1M_r \right) = \Tor^R_j\left( M_r / x_1'M_r \right),
\]
for every \( j \geq 0 \). This is equivalent to:
\[
\beta^R_j\left( M / (x_1, x_2, \ldots, x_r)M \right) = \beta^R_j\left( M / (x_1', x_2, \ldots, x_r)M \right),
\]
and
\[
\Tor^R_j\left( M / (x_1, x_2, \ldots, x_r)M \right) = \Tor^R_j\left( M / (x_1', x_2, \ldots, x_r)M \right),
\]
for every \( j \geq 0 \).

Combining with the above equality, we obtain the desired result.
\end{proof}

From Proposition~\ref{dim}, we immediately obtain the following corollary.

\begin{Corollary}
	Suppose \( M \) is a finitely generated \( R \)-module, and let \( x_1, \ldots, x_r \in R \) be a \( J \)-filter regular sequence on \( M \). For each \( i = 1, \ldots, r \), define
	\[
	a_i = a_J\left( \frac{(x_1,\ldots,x_{i-1})M : x_i}{(x_1,\ldots,x_{i-1})M} \right).
	\]
	Set
	\[
	N = \max\left\{ a_1 + 2a_2 + \cdots + 2^{r-1}a_r,\ \ar_J(x_1M),\ \ldots,\ \ar_J((x_1,\ldots,x_r)M) \right\} + 2.
	\]
	For every \( \varepsilon_1, \ldots, \varepsilon_r \in J^N \), let \( x_i' = x_i + \varepsilon_i \) for \( i = 1, \ldots, r \). Then:
	\begin{enumerate}\setlength{\itemsep}{0.5em}
		\item[\rm (i)] \( \pd_R\left(M / (x_1, \ldots, x_r)M\right) = \pd_R\left(M / (x_1', \ldots, x_r')M\right) \),
		\item[\rm (ii)] \( \id_R\left(M / (x_1, \ldots, x_r)M\right) = \id_R\left(M / (x_1', \ldots, x_r')M\right) \),
		\item[\rm (iii)] \( \depth_R\left(M / (x_1, \ldots, x_r)M\right) = \depth_R\left(M / (x_1', \ldots, x_r')M\right) \).
	\end{enumerate}	
\end{Corollary}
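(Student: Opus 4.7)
The corollary is an immediate consequence of Theorem~\ref{main-theorem} combined with Proposition~\ref{dim}, so the plan is simply to invoke the two in turn. The first step is to apply Theorem~\ref{main-theorem} to conclude that, under the stated choice of $N$ and $\varepsilon_i \in J^N$, one has
\[
\beta^R_j\bigl(M/(x_1,\ldots,x_r)M\bigr) = \beta^R_j\bigl(M/(x_1',\ldots,x_r')M\bigr)
\]
and
\[
\mu^j_R\bigl(M/(x_1,\ldots,x_r)M\bigr) = \mu^j_R\bigl(M/(x_1',\ldots,x_r')M\bigr)
\]
for every $j \ge 0$. All the substantive homological work sits inside this step; the rest of the argument is a pure translation through Proposition~\ref{dim}.

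The second step is to apply Proposition~\ref{dim} to each of the finitely generated modules $M/(x_1,\ldots,x_r)M$ and $M/(x_1',\ldots,x_r')M$. Part~(i) of that proposition identifies the projective dimension with $\sup\{i : \beta^R_i(\cdot) > 0\}$, so the equality of Betti numbers from the first step forces $\pd_R$ of the two modules to coincide. Parts~(ii) and~(iii) identify $\id_R$ and $\depth_R$ with the supremum and infimum of $\{i : \mu^i_R(\cdot) > 0\}$, respectively, and so the equality of Bass numbers immediately yields both remaining equalities in~(ii) and~(iii).

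There is no genuine obstacle here, since the entire computational difficulty has already been absorbed into Theorem~\ref{main-theorem}. The only care required is to note that both quotients are finitely generated (being quotients of $M$), so Proposition~\ref{dim} applies to each, and that equal indicator sets $\{i : \beta^R_i(\cdot) > 0\}$ (respectively $\{i : \mu^i_R(\cdot) > 0\}$) produce equal suprema and infima in $\N \cup \{\infty\}$, so the three equalities remain valid even if any of $\pd_R$, $\id_R$, or $\depth_R$ happens to be infinite.
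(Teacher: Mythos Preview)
Your proposal is correct and follows exactly the paper's approach: the paper states only that the corollary follows immediately from Proposition~\ref{dim} (together with Theorem~\ref{main-theorem}), and your write-up spells out precisely this two-step deduction.
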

\newpage

\end{document}